\begin{document}
\title{Totally Real Mappings and Independent Mappings}
\author{Howard Jacobowitz and Peter Landweber}
\date{\today}
\maketitle

\def\rank{\operatorname{rank}}
\def\cp2{{\mathbf{C}\mathrm{P}^2}}
\def\rp2{{\mathbf{R}\mathrm{P}^2}}
\def\bZ{{\mathbf{Z}}}
\def\codim{\operatorname{codim}}
\def\&{&=& }
\def\ph{pseudo-hermitian\ }
\def\bar{\overline}
\def\spc{strictly pseudo-convex\ }
\def\psh{plurisubharmonic}
\def\spsh{strictly plurisubharmonic}
\def\nbd{neighborhood}
\def\iff{if and only if }
\def\ctm{\mathbf{C}\otimes TM}
\def\comtanX{C\otimes T(X)}
\def\comtanO{C\otimes T(\Omega)}
\def\dpartial{\bar{\partial}}
\def\bR{\mathbf{R}}
\def\bC{\mathbf{C}}
\def\tr{totally real }
\def\im{independent map\ }
\def\ims{independent maps}
\def\bRP{\bf R \mathrm{P}}
\def\varep{\varepsilon}
\def\ctmnp{{\bf C}\otimes TM}  %%%%%%%%%%%% no parentheses.

\newtheorem{theorem}{Theorem}[section]
\newtheorem*{theoremstar}{Theorem}
\newtheorem{lemma}{Lemma}[section]
\newtheorem{definition}{Definition}
\newtheorem{corollary}{Corollary}[section]
\newtheorem*{remark}{Remark}  %so Remarks are not numbered

%%%%%%%%%%%%%%%%%%%%%%%%%%%%%%%%%%%%%%%%%%%%%%%%%%%%%%%%%%%%%%%%%%%%%%%%%%%%%%%

\section {Introduction}
We consider two classes of smooth maps $M^n\to \bC ^N$.  All manifolds are assumed to be connected (unless otherwise mentioned) and to have countable topology.

\begin{definition}
A map $M^n\to \bC ^N$ is called a \textbf{\tr immersion (embedding)} if $f$ is an immersion (embedding) and for $f_*: TM\to T\bC ^N$ we have 
\begin{equation} \label{tri-equation}
f_*(TM)\cap Jf_*(TM)=\{ 0 \}.
\end{equation}
Here we have identified $\bC ^N$ with $\bR^{2N}$ together with the natural anti-involution $J$.
\end{definition}

\begin{definition}
A map 
$f:M^n\to\bC^N$ is called an \textbf{\im} %and is denoted by 
if
\[
df_1(p)\wedge\cdots \wedge df_N (p)\neq 0
\]
for $f=(f_1,\ldots ,f_N)$ and for all $p\in M$.
\end{definition}

We are interested in the optimal value of $N$ for all manifolds of dimension $n$.
Sections \ref{existence} and \ref{optimality}
provide an exposition of \cite{HJL}, which gives some details not presented here. 
Section \ref{new}
discusses a special case where our two types of maps are related in a perhaps unexpected way.

%%%%%%%%%%%%%%%%%%%%%%%%%%%%%%%%%%%%%%%%%%%%%%%%%%%%%%%%%%%%

\section{Existence}\label{existence}

\begin{theorem}\label{tri}
Any map $f:M^n\to \bC ^N$ may be approximated by a
\tr embedding, provided $N\geq [\frac {3n} 2]$ and $n\geq 2$.
\end{theorem}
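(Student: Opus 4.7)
The plan is to combine Whitney-type approximation with a Thom transversality argument in which the numerical content matches the bound $N \geq [3n/2]$ exactly.

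First, I would use Whitney's approximation theorem to replace $f$ by a nearby smooth embedding $M^n \hookrightarrow \bR^{2N}$. Since $2N \geq 2[3n/2] \geq 3n \geq 2n+1$ for $n \geq 2$, there is ample dimension room; moreover, being an embedding is $C^1$-open, so it will persist under the further small perturbation carried out below.

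The heart of the argument is a codimension calculation in the Grassmannian $G = G_n(\bR^{2N})$. I would stratify the ``non-totally-real'' locus by CR rank: let
\[
B_k = \{\, V \in G : \dim_\bR (V \cap JV) \geq 2k \,\}, \qquad k \geq 1.
\]
A generic $V \in B_k$ is recorded by the complex $k$-plane $W = V \cap JV \subset \bC^N$ together with an $(n-2k)$-plane in $\bC^N/W$, which gives $\codim_G B_k = 2k(N-n+k)$; this is minimized at $k=1$, with value $2(N-n+1)$. Pulling back to the $1$-jet bundle $J^1(M, \bC^N)$, the analogous subsets $\widetilde{B}_k$ are fiberwise-closed of the same codimensions. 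By Thom's jet transversality theorem, an arbitrarily $C^\infty$-small perturbation makes $j^1 f$ transverse to every $\widetilde{B}_k$. The hypothesis $N \geq [3n/2]$ is precisely equivalent to $2(N-n+1) > n$, so each preimage $(j^1 f)^{-1}(\widetilde{B}_k)$ has negative expected dimension and is therefore empty. The perturbed $f$ is then totally real, and being $C^1$-close to the embedding produced in the first step it remains an embedding.

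The main obstacle is the codimension computation for the stratification $\{B_k\}$ of $G$; once this is done correctly, the matching of the hypothesis $N \geq [3n/2]$ with the inequality $2(N-n+1) > n$ drops out automatically, and Thom's theorem supplies the perturbation. A secondary point is that the embedding step and the transversality step can be combined because both conditions are $C^1$-open and the relevant spaces of smooth maps are Baire.
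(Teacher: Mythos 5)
Your argument is correct and follows essentially the same route as the paper: apply Thom jet transversality to avoid the bad set in the $1$-jet bundle, check that its codimension exceeds $n$, and combine with Whitney embedding. The paper states the bad set directly in jet coordinates as $\Sigma = \{\rank(A,JA) < 2n\}$ (which also absorbs the non-immersive jets) and simply asserts that $\dim M < \codim \Sigma$, while you give the more geometric Grassmannian picture, stratify by the CR-rank $k = \tfrac12\dim_{\bR}(V \cap JV)$, and carry out the codimension count $\codim B_k = 2k(N-n+k)$ explicitly; this is a genuinely useful elaboration, since it makes transparent that the bound comes from the $k=1$ stratum and that $2(N-n+1) > n$ is exactly $N \geq [3n/2]$. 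The only detail worth tidying: you should note that the non-immersive jets (where $\rank\,df < n$) form a stratum of codimension $2N-n+1$, which exceeds $2(N-n+1)$ for $n \geq 2$, so they can be included in $\Sigma$ harmlessly; alternatively, since you begin by perturbing to an embedding and embeddings are open in the strong $C^1$ topology, the subsequent fine perturbation stays an embedding, which is the route you took. Both the ordering (embed first, then make totally real, as you do) and the paper's ordering (make totally real, then re-approximate by an embedding, quoting Whitney and openness of total reality) work and are logically interchangeable.
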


\begin{remark}\normalfont
{For $N$ and $n$ satisfying these inequalities, any map of $M$ into $\bC ^N$ may be approximated by an embedding \cite{Whitney}
and in particular any \tr immersion  may be approximated by a \tr embedding.}
\end{remark}

\begin{theorem}\label{ind}
Any map $f:M^n\to \bC ^N$ may be approximated by an independent map,
provided $N\leq [\frac {n+1} 2]$.
\end{theorem}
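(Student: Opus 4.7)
\medskip
\noindent\textit{Proof proposal.}
The plan is to realize the independence condition as avoidance of a bad locus in the $1$-jet bundle, compute its codimension, and invoke Thom's jet transversality theorem together with a dimension count.

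First, I would reinterpret the condition. The wedge $df_1(p)\wedge\cdots\wedge df_N(p)$ lies in $\Lambda^N(T_p^*M\otimes\bC)$ and is nonzero precisely when the $N$ complex covectors $df_j(p)$ are linearly independent over $\bC$; equivalently, the complexified differential $(df(p))^{\bC}:T_pM\otimes\bC\to\bC^N$ is surjective. Let $\Sigma\subset J^1(M,\bC^N)$ be the closed subset of $1$-jets whose complexified rank drops strictly below $N$. Then a map $g:M\to\bC^N$ is an independent map if and only if the image of $j^1g$ is disjoint from $\Sigma$.

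Second, I would compute the codimension of $\Sigma$. Each fiber of $J^1(M,\bC^N)$ over $M\times\bC^N$ is $\mathrm{Hom}_{\bR}(T_pM,\bC^N)\cong\mathrm{Hom}_{\bC}(T_pM\otimes\bC,\bC^N)$, the space of complex $n\times N$ matrices. The classical determinantal stratification has the rank-$k$ stratum of complex codimension $(n-k)(N-k)$, so the top (smooth) stratum of $\Sigma$, namely rank $k=N-1$, has complex codimension $n-N+1$ and real codimension $2(n-N+1)$; the lower strata have strictly larger codimension. Now I would apply Thom's jet transversality theorem: a residual set of $g\in C^{\infty}(M,\bC^N)$ (in the Whitney topology) has $j^1g$ transverse to every stratum of $\Sigma$, and in particular $f$ is approximated by such $g$. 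The hypothesis $N\leq[\frac{n+1}{2}]$ is equivalent to $n<2(n-N+1)$, so $\dim M$ is strictly smaller than the real codimension of each stratum of $\Sigma$. Transversality to a submanifold of codimension exceeding $\dim M$ forces empty intersection, so $j^1g(M)\cap\Sigma=\emptyset$, i.e., $g$ is an independent map.

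The one point requiring care is that $\Sigma$ is a stratified, not smooth, subset of the jet bundle, so transversality must be invoked on each stratum separately. This is standard; since all strata $\{\mathrm{rank}=k\}$ with $k\le N-1$ have real codimension at least $2(n-N+1)>n$ under our numerical hypothesis, the dimension count applies to each of them simultaneously, and I expect no further difficulty.
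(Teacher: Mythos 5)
Your proposal is correct and follows essentially the same route as the paper: represent the failure of independence as the rank-deficient locus $\Sigma$ in $J^1(M,\bC^N)$, compute that every stratum of $\Sigma$ has real codimension at least $2(n-N+1)>n$ under the hypothesis $N\le[\frac{n+1}{2}]$, and apply the jet transversality theorem to conclude that generic maps avoid $\Sigma$. The only difference is that you spell out the determinantal codimension count and the equivalence of the numerical hypotheses, which the paper leaves as ``easy to verify.''
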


The proofs of these theorems depend on a well-known result from differential topology.  Let $J^1(M,W)$ be the space of one-jets of maps from $M$ to $W$.  Denote the lift of any map
\[
f:  M\to W
\]
by 
\[
j^1(f):  M\to J^1(M,W).
\]
\begin{theoremstar}  If $\Sigma \subset  J^1(M,W)$ is  stratified by locally closed submanifolds   and 
$\dim M<\codim \Sigma$ then there exists some $F:M\to W$ with 
$(j^1(F)M)\cap \Sigma = \varnothing$.
\end{theoremstar}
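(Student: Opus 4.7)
The plan is to invoke Thom's jet transversality theorem stratum by stratum and combine the resulting residual sets via the Baire property. First I would recall Thom's theorem: for any locally closed $C^\infty$ submanifold $Z\subset J^1(M,W)$, the subset
\[
\mathcal{T}_Z = \{F\in C^\infty(M,W) : j^1(F) \text{ is transverse to } Z\}
\]
is residual in $C^\infty(M,W)$ equipped with the Whitney $C^\infty$ topology. This is the standard result I would quote without reproof; its proof proceeds via finite-dimensional polynomial approximation together with Sard's theorem, and exploits the fact that the evaluation map of such finite-dimensional families into $J^1(M,W)$ is a submersion.

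Next I would exploit the dimension count. If $j^1(F)$ is transverse to $Z$, then $(j^1(F))^{-1}(Z)$ is a submanifold of $M$ whose codimension equals $\codim Z$. Under the hypothesis $\dim M<\codim Z$, this forced codimension exceeds $\dim M$, so $(j^1(F))^{-1}(Z)$ must be empty; equivalently, $j^1(F)(M)\cap Z=\varnothing$.

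To handle the stratified set, write $\Sigma=\bigcup_\alpha \Sigma_\alpha$ for the given decomposition. Since $J^1(M,W)$ is second countable, the collection $\{\Sigma_\alpha\}$ may be taken to be countable. Each stratum satisfies $\codim \Sigma_\alpha\geq \codim \Sigma>\dim M$, so the preceding paragraph applies to each one. The intersection $\bigcap_\alpha \mathcal{T}_{\Sigma_\alpha}$ is then a countable intersection of residual sets, hence residual in $C^\infty(M,W)$, which is a Baire space in the Whitney topology. Any $F$ in this intersection satisfies $j^1(F)(M)\cap \Sigma_\alpha=\varnothing$ for every $\alpha$, and therefore $j^1(F)(M)\cap \Sigma=\varnothing$.

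The principal input, and therefore the main potential obstacle, is Thom's transversality theorem itself; once that is granted, the dimension count and the Baire argument over a countable stratification are routine. A secondary point to verify is that ``stratified by locally closed submanifolds'' may indeed be taken countable in the present setting, which follows from second countability of the jet bundle.
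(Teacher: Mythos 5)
Your argument is correct and is exactly the standard one: the paper itself does not spell out a proof but defers to Eliashberg--Mishachev, where the result is obtained precisely as you do---Thom jet transversality for each stratum, the observation that transversality to a submanifold of codimension exceeding $\dim M$ forces empty preimage, and a Baire-category intersection over the (countable, by second countability and local finiteness) strata. Your remark that residuality in fact yields a dense set of such $F$ is worth keeping in mind, since it is this density, not bare existence, that Theorems \ref{tri} and \ref{ind} actually use to approximate an arbitrary given map.
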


The proof is straightforward, see for instance \cite{EM}.
  
\bigskip
To prove Theorem \ref{tri} we take $\Sigma$ to be given in local coordinates over an open set $U$ by
\[
\Sigma =\{(p,q,a^1,\ldots ,a^n):p\in U, q\in \bR ^{2N} , \rank \,(A,JA)<2n\} 
\]
where  
\[
A=(a^1\cdots a^n)
\]
is a real $N\times n$ matrix and $(A,JA)$ is the $N\times 2n$ matrix obtained by 
juxtaposition.  
To prove Theorem \ref{ind} we set $r=[\frac {n+1} 2]$ and use
\[
\Sigma =\{(p,q,\alpha ^1,\ldots , \alpha ^n),\,\rank A <r\} 
\]
where $A$ is the complex $r\times n$ matrix
\[
A=(\alpha ^1\cdots \alpha ^n).
\]  
It is easy to verify that these are stratified subsets and that the given values of 
$N$ lead to 
$$\dim M < \codim \Sigma .$$

%%%%%%%%%%%%%%%%%%%%%%%%%%%%%%%%%%%%%%%%%%%%%%%%%%%%%%%%%%%%%%%%%%%%%%

\section {Optimality}\label{optimality}
To explain our examples, we find necessary bundle-theoretic conditions for \tr immersions and for \ims .

\begin{lemma} \label{QQ}
\begin{enumerate}\item[(a)]
If $M$ has a totally real immersion into $\bC^N$ then there exists a bundle $Q$ of rank $r=N-n$ such that 
\[
(\ctm) \oplus Q \cong N\varepsilon .
\]

\item [(b)]\label{BB}
If $M$ has an independent map into $\bC^N$ then there exists a bundle $B$ of rank $r=n-N$ such that
\[
\ctm \cong N\varepsilon \oplus B.
\]
\end{enumerate}
\end{lemma}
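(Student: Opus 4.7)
My plan is to treat (a) and (b) in parallel: in each case the hypothesis gives a bundle map between $\ctm$ and the trivial complex bundle $N\varep$ that is fiberwise injective (in (a)) or fiberwise surjective (in (b)), and then the short exact sequence of complex bundles splits over the paracompact base $M$.

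For part (a), let $f:M^n\to\bC^N$ be a \tr immersion. The pullback $f^*(T\bC^N)$ is the trivial complex bundle $N\varep$ of rank $N$, with complex structure given by $J$. I define a complex bundle map
\[
\Phi : \ctm \longrightarrow f^{*}(T\bC^{N})=N\varep,\qquad v\otimes(a+bi)\longmapsto a\,f_{*}(v)+b\,Jf_{*}(v).
\]
This is $\bC$-linear (the complex structure on the left comes from $\bC$, the one on the right from $J$). To see $\Phi$ is fiberwise injective, suppose $\Phi(v_1\otimes 1+v_2\otimes i)=f_{*}(v_1)+Jf_{*}(v_2)=0$; then $f_{*}(v_1)=-Jf_{*}(v_2)\in f_{*}(TM)\cap Jf_{*}(TM)=\{0\}$, so by injectivity of $f_*$ we get $v_1=v_2=0$. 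Thus $\Phi$ is a fiberwise injective complex bundle map into the trivial bundle, and setting $Q=N\varep/\Phi(\ctm)$ gives a complex bundle of rank $N-n$. The short exact sequence
\[
0\longrightarrow \ctm \longrightarrow N\varep \longrightarrow Q\longrightarrow 0
\]
splits over the paracompact manifold $M$, yielding $(\ctm)\oplus Q\cong N\varep$.

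For part (b), let $f=(f_1,\ldots ,f_N)$ be an \im. Each $df_i$ extends $\bC$-linearly to a functional $\ctm \to \bC$, and assembling them gives a complex bundle map
\[
\Psi=(df_1,\ldots ,df_N):\ctm \longrightarrow N\varep .
\]
The hypothesis $df_1\wedge\cdots\wedge df_N\neq 0$ at every point is exactly the statement that $\Psi$ is fiberwise surjective. Hence $B:=\ker\Psi$ is a complex bundle of rank $n-N$, and the short exact sequence
\[
0\longrightarrow B\longrightarrow \ctm \stackrel{\Psi}{\longrightarrow} N\varep \longrightarrow 0
\]
splits (most cleanly because the quotient is trivial), giving $\ctm \cong N\varep \oplus B$.

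The only real content is producing the bundle map with the correct fiberwise rank; the splittings are standard. The step most worth checking carefully is the injectivity of $\Phi$ in (a), since it is the one place where the hypothesis \eqref{tri-equation} enters and where one must be careful that the naive identification of $\ctm$ with $f_{*}(TM)+Jf_{*}(TM)$ respects the complex structures on both sides.
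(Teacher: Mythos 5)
Your proof is correct and follows essentially the same approach as the paper: in each part you construct a fiberwise injective (resp.\ surjective) $\bC$-linear bundle map from $\ctm$ to a trivial rank-$N$ complex bundle and then split the resulting short exact sequence. The only cosmetic difference is in (a), where you target $f^*(T\bC^N)$ equipped with $J$ while the paper targets $T^{1,0}(\bC^N)$ via $v\mapsto f_*(v)-iJf_*(v)$; these are the same map under the standard isomorphism $(T\bC^N,J)\cong T^{1,0}$, and in (b) your $\Psi$ coincides with the paper's $\psi_f$ exactly.
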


Here $N\varepsilon$ is the trivial complex vector bundle over $M$ of rank $N$.

\begin{remark}\normalfont
{An application of the Gromov h-principle  shows that these conditions are also sufficient.  See a discussion of this in \cite{Korea}.  We will make use of the sufficiency below.}
\end{remark}
\begin{proof}[Proof of Lemma \ref{QQ}]
(a) The condition \eqref{tri-equation} is equivalent to the fiber injectivity of
\[
\phi _f:\ctm\to T^{1,0}(\bC ^N)
\]
where $\phi _f(v)$ is defined, for $v\in \ctm$, by
\[
\phi _f(v)=f_*(v)-iJf_*(v).
\]
Thus if $M$ has a \tr immersion \ into $\bC ^N$ then
\[
(\ctm) \oplus Q \cong N\varepsilon
\]
where $Q$ is the bundle in $T^{1,0}$ normal to $\phi _f(\ctm )$.

\medskip
(b) The map 
\[
\psi _f :\ctm\to T^{1,0}
\]
given by
\[
\psi _f(v)=\sum df_j(v)\partial _{z_j}
\]
is surjective on the fibers.  So
\[
\ctm \cong N\varepsilon \oplus B.
\]
with $B=\ker {\psi _f}$. 
\end{proof}
%%%%%%%%%%%%%%%%%%%%%%%%%%%%%%%%%%%%%%%%%%

\subsection{Totally real immersions}

We need to find a manifold of dimension $n$ that does not have a \tr immersion into 
$\bC ^N$ for $N=[\frac {3n} 2]-1$.  We provide four families of examples according to the residue of the dimension of $M$ modulo $4$.  Let
\[
M^{4k} = \cp2 \times \cdots \times \cp2 = (\cp2)^{\times k}
\]
be the product of $k$ copies of the complex projective plane.  The manifolds we use and the ensuing arguments are similar to those given by Forster \cite{F}, but we use orientable manifolds as far as possible.

\begin{theorem}\ \label{M}
\begin{itemize}
	\item 
	$M^{4k} $ does not admit a \tr immersion into $\bC ^N$ for $N=6k-1$.
	
	\item 
	$M^{4k+1}=M^{4k}\times S^1 $ does not admit a \tr immersion into 
	$\bC ^N$ for $N=6k$.

	\item 
	$M^{4k+2}=M^{4k}\times \rp2 $ does not admit a \tr immersion into 
	$\bC ^N$ for $N=6k+2$.
	
	\item 
	$M^{4k+3}=M^{4k}\times \rp2\times S^1 $ does not admit a \tr immersion into 	
	$\bC ^N$ for $N=6k+3$.
	
\end{itemize}
\end{theorem}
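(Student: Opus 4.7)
The plan is to invoke Lemma~\ref{QQ}(a) and reduce each statement to a bundle-theoretic impossibility. If $M$ admits a \tr immersion into $\bC^N$, then there is a complex bundle $Q$ of complex rank $r = N - n$ with $(\bC \otimes TM) \oplus Q \cong N\varepsilon$, so $c(Q)\cdot c(\bC \otimes TM) = 1$ in $H^*(M;\bZ)$ and $w(Q)\cdot w(\bC \otimes TM) = 1$ in $H^*(M;\bZ/2)$. The strategy is, for each of the four families, to compute this total characteristic class of $\bC \otimes TM$, invert it to recover $c(Q)$ or $w(Q)$, and exhibit a nonzero component in a degree that exceeds the rank of $Q$, contradicting the vanishing of Chern classes $c_j$ for $j > r$ (resp.\ Stiefel--Whitney classes $w_j$ for $j > 2r$).

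For the orientable families $M^{4k}$ and $M^{4k+1}$ I would work with integral Chern classes. Since $\bC \otimes T\cp2 \cong T\cp2 \oplus \overline{T\cp2}$ and $c(T\cp2)=(1+x)^3$ with $x\in H^2(\cp2;\bZ)$ the hyperplane class, the relation $x^3=0$ gives $c(\bC \otimes T\cp2)=(1-x^2)^3 = 1-3x^2$. Taking the external product over $k$ factors, and noting that the $S^1$ factor contributes a trivial complex summand, yields
\begin{equation*}
c(\bC \otimes TM^{4k}) \;=\; c(\bC \otimes TM^{4k+1}) \;=\; \prod_{i=1}^{k}\bigl(1 - 3 x_i^2\bigr),
\end{equation*}
whose inverse in the truncated polynomial ring is $\prod_{i=1}^k(1 + 3x_i^2)$. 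Its top nonzero component $c_{2k}(Q) = 3^k x_1^2\cdots x_k^2$ generates a copy of $\bZ$ in $H^{4k}(M^{4k};\bZ)$ and pulls back to a nonzero class on $M^{4k+1}$. But in both cases $r = 2k-1$, so $c_{2k}(Q)$ must vanish, a contradiction.

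For the remaining families containing $\rp2$ I would pass to mod~$2$ coefficients. The underlying real bundle of $\bC \otimes E$ is $E \oplus E$, so in characteristic $2$ one has $w(\bC \otimes T\cp2) = (1+x+x^2)^2 = 1 + x^2$ (using $x^3=0$) and $w(\bC \otimes T\rp2) = (1+a+a^2)^2 = 1 + a^2$, where $a\in H^1(\rp2;\bZ/2)$ generates. The $S^1$ factor is again trivial, so
\begin{equation*}
w(\bC \otimes TM^{4k+2}) \;=\; w(\bC \otimes TM^{4k+3}) \;=\; \prod_{i=1}^{k}(1 + x_i^2)\cdot(1 + a^2),
\end{equation*}
and since each factor is its own mod-$2$ inverse this same product equals $w(Q)$. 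Its degree-$(4k+2)$ component $x_1^2\cdots x_k^2 \cdot a^2$ is nonzero in $H^{4k+2}(\,\cdot\,;\bZ/2)$ in either case (being the fundamental class mod $2$ for $M^{4k+2}$, and pulling back nontrivially to $M^{4k+3}$). But $Q$ has real rank $2r = 4k$, so $w_{4k+2}(Q)=0$, again a contradiction.

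The main obstacle is the choice of coefficient ring in each regime: once $\rp2$ enters, the obstructing class is $2$-torsion and invisible to integral Chern classes, so one must descend to $\bZ/2$ cohomology and verify that the required monomial actually survives in the truncated ring. Apart from this, the argument is a direct characteristic-class computation in the spirit of Forster~\cite{F}.
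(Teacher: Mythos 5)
Your proposal is correct, and for the orientable cases ($M^{4k}$, $M^{4k+1}$) it is essentially the paper's own argument: Lemma~\ref{QQ}(a) plus the computation $c(Q)=\prod(1+3a_i^2)$ forces $\operatorname{rank}_{\bC}Q \ge 2k$, contradicting $r=2k-1$. For the non-orientable cases ($M^{4k+2}$, $M^{4k+3}$), however, you take a genuinely different route. The paper stays in integral cohomology: it lets $b_1 \in H^2(\rp2;\bZ)\cong \bZ_2$ be the Chern class of the complexified tautological line bundle, so that $c(\bC\otimes T\rp2) = (1+b_1)^3 = 1+b_1$, and then $c(Q) = \prod(1+3a_i^2)(1-b_1)$ has a nonzero (torsion) class in degree $4k+2$, forcing $\operatorname{rank}_{\bC}Q\ge 2k+1 > 2k = r$. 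You instead reduce mod $2$ and use Stiefel--Whitney classes, getting $w(Q)=\prod(1+x_i^2)(1+a^2)$ with nonzero top component $x_1^2\cdots x_k^2 a^2$ in degree $4k+2$, exceeding the real rank $2r=4k$ of $Q$. Both arguments are valid and reach the same contradiction; yours trades the slightly more delicate bookkeeping of torsion in $H^*(\rp2;\bZ)$ for the cleaner mod-$2$ ring, and is arguably the more transparent route for these two families.

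One small correction to your closing remark: the obstruction from $\rp2$ is \emph{not} actually invisible to integral Chern classes. The class $b_1$ is $2$-torsion but lives perfectly well in $H^2(\rp2;\bZ)\cong\bZ_2$, and the paper exploits it directly, so the integral computation does work; passing to $\bZ/2$ coefficients is a convenient alternative rather than a necessity.
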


Denote the total Chern class of a complex vector bundle $B$ over $M$ by
\[
c(B)=1+c_1(B)+\cdots +c_k(B)
\]
where $c_j(B) \in H^{2j}(M;\bZ)$ and 
$k=\min (\rank B, [\frac{\dim M}{2}])$.  We have the following well-known result (see e.g. \cite[Section 14]{MS}).
\begin{lemma}\label{agenerator}
Let $a$ denote the first Chern class of the hyperplane line bundle 
$\mathcal{O}(1)$ on $\cp2$.  Then
\[
c(\bC \otimes T\cp2) = 1-3a^2.
\]
\end{lemma}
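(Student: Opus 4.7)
The plan is to compute the total Chern class of $\ctm$ for $M=\cp2$ by splitting it into holomorphic and antiholomorphic parts and using the Euler sequence.

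First I would observe that the natural isomorphism $\bC\otimes T\cp2 \cong T^{1,0}\cp2 \oplus T^{0,1}\cp2$ identifies $T^{0,1}\cp2$ with the complex conjugate bundle $\overline{T^{1,0}\cp2}$. Since Chern classes of the conjugate bundle satisfy $c_j(\bar E)=(-1)^j c_j(E)$, the Whitney product formula gives
\[
c(\bC\otimes T\cp2) \;=\; c(T^{1,0}\cp2)\cdot c(\overline{T^{1,0}\cp2}).
\]

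Next I would invoke the Euler sequence on $\cp2$,
\[
0 \to \varepsilon \to \mathcal{O}(1)^{\oplus 3} \to T^{1,0}\cp2 \to 0,
\]
which yields $T^{1,0}\cp2 \oplus \varepsilon \cong \mathcal{O}(1)^{\oplus 3}$. Taking total Chern classes and using $c(\mathcal{O}(1))=1+a$ together with the relation $a^3=0$ in $H^\ast(\cp2;\bZ)$, I obtain
\[
c(T^{1,0}\cp2) = (1+a)^3 = 1+3a+3a^2.
\]
Consequently $c(\overline{T^{1,0}\cp2}) = 1-3a+3a^2$.

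Finally I would multiply:
\[
c(\bC\otimes T\cp2) = (1+3a+3a^2)(1-3a+3a^2) = (1+3a^2)^2 - (3a)^2 = 1+6a^2+9a^4-9a^2.
\]
Since $a^4=0$ on $\cp2$, this collapses to $1-3a^2$, as desired. There is no real obstacle here; the only thing to be careful about is the sign rule for Chern classes of the conjugate bundle and the truncation $a^3=0$, which conveniently eliminates the would-be $a^4$ term in the product.
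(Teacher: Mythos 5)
Your proof is correct. Note that the paper does not actually prove this lemma; it labels it a ``well-known result'' and cites Milnor--Stasheff, Section~14, without argument. Your derivation --- decomposing $\bC\otimes T\cp2\cong T^{1,0}\cp2\oplus\overline{T^{1,0}\cp2}$, extracting $c(T^{1,0}\cp2)=(1+a)^3=1+3a+3a^2$ from the Euler sequence, applying the conjugation rule $c_j(\overline{E})=(-1)^jc_j(E)$, and multiplying out with the truncation $a^3=0$ --- is precisely the standard computation that the citation points to, so you are filling in the omitted details rather than taking a genuinely different route. A mildly different packaging, closer to the characteristic-class formalism used later in the paper, is to recall that $c(\bC\otimes TM)=1-p_1(M)+p_2(M)-\cdots$ up to $2$-torsion, that $H^\ast(\cp2;\bZ)\cong\bZ[a]/(a^3)$ is torsion-free, and that $p(\cp2)=(1+a^2)^3=1+3a^2$; this gives $1-3a^2$ at once. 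Since $p_1(\cp2)=3a^2$ is itself obtained from the Euler sequence, the two arguments are the same computation seen from two angles; yours has the advantage of being self-contained and making the role of the sign rule for conjugate bundles explicit.
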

We need to show that  in the first two cases of Theorem \ref{M} there is no bundle $Q$ of rank $2k-1$ and in the last two cases no bundle $Q$ of rank $2k$ such that $(\ctm)\oplus Q$ is trivial.  We shall show this for $M^{4k+1}$ and $M^{4k+3}$.  The other two cases, which are very similar to these, are done in \cite{HJL}.  So first we assume that there is some $Q$ with
\[
(\ctm ^{4k+1})\oplus Q \cong N\varepsilon
\]
for some $N$ and show that the rank of $Q$ is at least $2k$.

Let $a_1,\ldots ,a_k$ be the pull-backs of $a$ to $M$ under the corresponding projections to $\cp2$, so that $a_i^3=0$ for all $i$.  We have 
\[
c(\ctm ^{4k+1})\cdot c(Q)=1 .
\]
Thus $c(Q)= (1+3a_1^2)\cdots (1+3a_k^2)$.  Since $a_1^2\cdots a_k^2\neq 0$, this implies that the rank of $Q$ is at least $2k$.

Next we assume that there exists some $Q$ with 
\[
(\ctm ^{4k+3})\oplus Q=N\varepsilon
\]
for some $N$ and show that the rank of $Q$ is at least $2k+1$.  Let $a_1,\ldots a_k$ be as before and let $b_1$ be the pull-back of the generator in 
$H^2(\rp2;\bZ)$ given by the Chern class of the complexification of the tautological line bundle on $\rp2$.  We have
\[
c(\ctm ^{4k+3})\cdot c(Q)=1
\]
which now gives
\[
c(Q)= (1+3a_1^2)\cdots (1+3a_k^2)(1-b_1).
\]
This implies that the rank of $Q$ is at least $2k+1$.

\medskip
\subsection{Independent maps}
The same manifolds $M^{4k+r}~ (0 \leq r \leq 3)$ show that Theorem \ref{ind} is also optimal.
\begin{theorem}\ 
\begin{itemize}
	\item 
	$M^{4k} $ does not admit an \im  into $\bC ^N$ for $N=2k+1$.
	
	\item 
	$M^{4k+1}=M^{4k}\times S^1 $ does not admit an \im  into 
	$\bC ^N$ for $N=2k+2$.

	\item 
	$M^{4k+2}=M^{4k}\times \rp2 $ does not admit an \im  into 
	$\bC ^N$ for $N=2k+2$.
	
	\item 
	$M^{4k+3}=M^{4k}\times \rp2\times S^1 $ does not admit an \im into 	
	$\bC ^N$ for $N=2k+3$.
	
\end{itemize}
\end{theorem}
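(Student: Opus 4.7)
The plan is to mirror the proof of Theorem \ref{M}, using part (b) of Lemma \ref{QQ} in place of part (a). Assume that $M^{4k+r}$ admits an independent map into $\bC^N$ for the value of $N$ stated. Then there exists a complex bundle $B$ of rank $n-N$ with $\ctm\cong N\varepsilon\oplus B$, so $c(B)=c(\ctm)$. The strategy is to exhibit a Chern class $c_j(\ctm)\neq 0$ for some $j>n-N$, which contradicts $c_j(B)=0$ whenever $j>\rank B$.

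First I would compute $c(\ctm^{4k+r})$. Since the tangent bundle of a product splits as a direct sum, $TS^1$ is trivial, and Lemma \ref{agenerator} gives $c(\ctm\cp2)=1-3a^2$, only the $\rp2$ factor needs new input. The standard relation $T\rp2\oplus\varepsilon_{\bR}\cong 3\ell$ complexifies to $\ctm\rp2\oplus\varepsilon\cong 3(\ell\otimes\bC)$, and because $b_1$ has order $2$ and $b_1^2=0$ in $H^*(\rp2;\bZ)$, we obtain $c(\ctm\rp2)=(1+b_1)^3=1+b_1$. Hence
\[
c(\ctm^{4k+r})\;=\;\prod_{i=1}^{k}(1-3a_i^2)\cdot(1+b_1)^{\varep_r},
\]
where $\varep_r=0$ for $r\in\{0,1\}$ and $\varep_r=1$ for $r\in\{2,3\}$.

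Next I would locate the top nonzero term. For $r\in\{0,1\}$ the highest-degree summand is $(-3)^k a_1^2\cdots a_k^2\in H^{4k}$, a nonzero integer multiple of the pull-back of the fundamental class of $(\cp2)^{\times k}$; this identifies $c_{2k}(\ctm)\neq 0$. For $r\in\{2,3\}$ the top summand is $(-3)^k a_1^2\cdots a_k^2\cdot b_1\in H^{4k+2}$; the Künneth theorem applies without any Tor correction since $H^*((\cp2)^{\times k})$ and $H^*(S^1)$ are torsion-free, and $(-3)^k$ is odd so reduces to $1\pmod 2$, keeping this class nonzero. Hence $c_{2k+1}(\ctm)\neq 0$. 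The nonvanishings now force $\rank B\geq 2k$ in the first two cases and $\rank B\geq 2k+1$ in the last two. Checking $\rank B=n-N$ against the stated values of $N$, one finds $n-N=2k-1$ for $r\in\{0,1\}$ and $n-N=2k$ for $r\in\{2,3\}$, each time short by one, which yields the contradictions.

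The main subtlety is the $\rp2$ cases, where one must track integer Chern classes through the $2$-torsion in $H^*(\rp2;\bZ)$. This is handled by reducing modulo $2$ at the critical step and noting that the coefficient $(-3)^k$ is odd; the torsion-freeness of the other factors ensures the Künneth product $a_1^2\cdots a_k^2\cdot b_1$ remains nonzero. Everything else is a direct numerical comparison of ranks and Chern class degrees, entirely parallel to the argument used for Theorem \ref{M}.
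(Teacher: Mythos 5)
Your proposal is correct and follows essentially the same route as the paper: apply Lemma~3.1(b) to get $\ctm\cong N\varepsilon\oplus B$ with $\operatorname{rank}B=n-N$, compute $c(B)=c(\ctm)$ as a product of $(1-3a_i^2)$ factors times $(1+b_1)$ when an $\rp2$ factor is present, identify the top nonvanishing Chern class to bound $\operatorname{rank}B$ from below, and compare with $n-N$. This is precisely the argument the paper sketches for $M^{4k+1}$ (noting the full details are in \cite{HJL}), and your handling of the mod-$2$ torsion coming from $\rp2$ matches the treatment used in the proof of Theorem~3.2.
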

The proofs are similar to those of Theorem \ref{M} and can be found in \cite{HJL}.  For instance, to show that $M^{4k+1}$ does not admit an \im into $\bC ^N$ for $N=2k+2$ we start with
\[
\ctm ^{4k+1}\cong N\varepsilon \oplus B \]
for some $N$ which gives us 
\[
c(B)=c(\ctm ^{4k+1})= (1-3a_1)^2\cdots (1-3a_k^2).
\]
So the rank of $B$ is at least $2k$ and since $N+\text{rank}\, B=4k+1$, this leads to $N\leq 2k+1$.

%%%%%%%%%%%%%%%%%%%%%%%%%%%%%%%%%%%%%%%%%%%%%%%%%%%%%%%%%%%%%%%%%%%%%%

\section {New results for four-manifolds}\label{new}

The fact that the same set of examples demonstrates the optimality of both Theorems \ref{tri} and \ref{ind} suggests that for some class of manifolds the two conditions
\[
(\ctm) \oplus Q \cong N\varep .
\]
and
\[
\ctm \cong N\varepsilon \oplus B.
\]
are related.  As a first step in exploring this, we present a result for 
four-dimensional manifolds.  

\begin{theorem}\label{iff}
Let $M$ be either an open or an orientable four-manifold.  Then $M$ has a \tr immersion into $\bC ^5$ \iff $M$ admits an \im  into $\bC ^3$.
\end{theorem}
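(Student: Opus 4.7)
The plan is to reduce both existence conditions to statements about Chern classes of $E:=\ctm$ and then to compare them.  By Lemma~\ref{QQ} combined with the h-principle remark, a \tr immersion $M\to\bC^5$ exists \iff there is a complex line bundle $Q$ with $E\oplus Q\cong 5\varep$, and an \im $M\to\bC^3$ exists \iff there is a complex line bundle $B$ with $E\cong 3\varep\oplus B$.

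The workhorse will be the following classification: for $r\geq 2$ and any CW complex $X$ with $\dim X\leq 4$, complex rank-$r$ vector bundles on $X$ are classified up to isomorphism by the pair $(c_1,c_2)\in H^2(X;\bZ)\times H^4(X;\bZ)$.  I would derive this from a short Postnikov-tower computation: $\pi_2(BU(r))=\bZ$, $\pi_3(BU(r))=0$, $\pi_4(BU(r))=\bZ$, and the only potential $k$-invariant lies in $H^5(K(\bZ,2);\bZ)=0$, so the $4$-type of $BU(r)$ is $K(\bZ,2)\times K(\bZ,4)$.  Line bundles are classified by $c_1$ alone.

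Applying this classification together with the Whitney product formula to the two bundle equations (and using that $H^i(M;\bZ)=0$ for $i>4$), the condition $E\oplus Q\cong 5\varep$ collapses to $c_1(Q)=-c_1(E)$ together with $c_2(E)=c_1(E)^2$; since any prescribed first Chern class is realised by some line bundle $Q$, this reduces to the single identity $c_2(E)=c_1(E)^2$.  Similarly, $E\cong 3\varep\oplus B$ reduces to $c_2(E)=0$ (with $c_1(B)=c_1(E)$, freely soluble).

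Finally, I would verify that $c_1(E)^2=0$; this is where the hypothesis on $M$ enters.  Since $\ctm$ is self-conjugate as a complex bundle, $c_1(E)=-c_1(E)$, so $c_1(E)$ is $2$-torsion and hence so is $c_1(E)^2\in H^4(M;\bZ)$.  For an open connected 4-manifold $H^4(M;\bZ)=0$; for a connected orientable 4-manifold $H^4(M;\bZ)$ is either $\bZ$ (closed case) or $0$, and is in any case torsion-free.  Thus $c_1(E)^2=0$, the two Chern-class conditions coincide, and the theorem follows.  The main obstacle will be the Postnikov-tower classification, particularly for rank-$5$ bundles; once that is in hand, the rest is Chern-class bookkeeping.
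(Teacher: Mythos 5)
Your proof is correct, and it takes a genuinely different route from the paper's. You reduce both bundle equations to Chern-class identities by invoking a general classification fact (complex bundles of rank $\geq 2$ over a CW complex of dimension $\leq 4$ are determined by $(c_1,c_2)$, which you justify via the Postnikov tower of $BU(r)$), and then observe that the resulting two conditions $c_2(E)=c_1(E)^2$ and $c_2(E)=0$ coincide once one knows $c_1(E)^2=0$; the hypothesis on $M$ enters exactly once, to kill the $2$-torsion class $c_1(E)^2\in H^4(M;\bZ)$. The paper instead argues each direction by hand: for the forward implication it derives $c_2(\ctm)=0$, splits $\ctm\cong 2\varep\oplus B'$ by stepwise removal of trivial line bundles, and then appeals to the Euler-class criterion to split $B'\cong \varep\oplus B$; for the converse it bootstraps from the existence theorem (Theorem \ref{tri}) to obtain a rank-$2$ complement $Q'$ and then shows $c_2(Q')=0$ forces $Q'\cong Q\oplus\varep$. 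Your approach is more uniform and makes the symmetry of the two conditions transparent, at the cost of invoking the Postnikov-tower classification as a black box; the paper's approach is more elementary and self-contained, using only the Euler class and stable range arguments. One small point worth making explicit in a final write-up: you need $M$ to have the homotopy type of a CW complex of dimension $\leq 4$ (true for any smooth $4$-manifold, and dimension $\leq 3$ in the open case), so the classification theorem applies.
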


This result is false (in both directions) for non-orientable $4$-manifolds:

\begin{theorem}\label{sharp}
$\mathbf{R} \mathrm{P}^4$ admits an \im into $\bC ^3$, but no \tr immersion into 
$\bC ^5$.  Moreover, the connected sum of $\mathbf{R} \mathrm{P}^4$ and 
$\rp2 \times \rp2$ admits a \tr immersion into $\bC ^5$, but no \im  into $\bC ^3$.  
\end{theorem}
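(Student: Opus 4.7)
By Lemma~\ref{QQ} together with the h-principle sufficiency noted in the Remark, each of the four assertions reduces to producing or obstructing a splitting of $\bC\otimes TM$: a \tr immersion into $\bC^5$ exists iff $(\bC\otimes TM)\oplus Q \cong 5\varepsilon$ for some complex line bundle $Q$, and an \im into $\bC^3$ exists iff $\bC\otimes TM \cong 3\varepsilon \oplus B$ for some complex line bundle $B$.  A key ingredient is that a complex bundle on a $4$-manifold with $c_1 = c_2 = 0$ is trivial:  the primary obstruction to splitting off a section of a rank-$m$ bundle is $c_m$, which vanishes by dimension for $m\ge 3$ and by hypothesis for $m = 1, 2$, so trivial summands split off iteratively.

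For $\bRP^4$, complexifying $T\bRP^4 \oplus \varepsilon \cong 5\gamma$ gives $c(\bC\otimes T\bRP^4) = (1+b)^5 = 1+b$, where $b$ generates $H^2(\bRP^4;\bZ) = \bZ/2$ and $b^2$ generates $H^4 = \bZ/2$.  A rank-$1$ $Q$ satisfying $(1+b)(1+c_1(Q)) = 1$ would force $b^2 = 0$, so no \tr immersion into $\bC^5$ exists.  For the \im, set $L = \bC\otimes\gamma$; since $(1+b)^4 = 1$, the rank-$4$ bundle $4L$ has vanishing Chern classes and is therefore trivial, so $\bC\otimes T\bRP^4 \oplus \varepsilon \cong 5L \cong L \oplus 4\varepsilon$.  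Canceling one $\varepsilon$ (valid stably in this rank range) yields $\bC\otimes T\bRP^4 \cong L \oplus 3\varepsilon$, and the h-principle produces the \im.

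For $M = \bRP^4 \#(\rp2\times\rp2)$, to obstruct the \im into $\bC^3$ I compute $c_2(\bC\otimes TM) \equiv w_2(TM)^2 \pmod 2$.  Since $w_2(T\bRP^4) = 0$ and $w_2(T(\rp2\times\rp2)) = \beta_1^2 + \beta_1\beta_2 + \beta_2^2$, and cross-summand cup products vanish in the connected sum (via the collapse $M \to M_1 \vee M_2$), squaring and using $\beta_i^3 = 0$ yields $w_2(TM)^2 = \beta_1^2\beta_2^2$, the nonzero top class of $M$.  So $c_2(\bC\otimes TM) \ne 0$ in $H^4(M;\bZ) = \bZ/2$, contradicting any splitting $3\varepsilon\oplus B$.

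For the \tr immersion on $M$, take $Q$ with $c_1(Q) = b + b_1 + b_2 \in H^2(M;\bZ)$.  The degree-$2$ Chern class of $(\bC\otimes TM)\oplus Q$ is automatically zero, and in degree $4$ the product $(b + b_1 + b_2)^2$ reduces to $b^2$ (the $b_i^2$ vanish because $\beta_i^2 \in H^4(\rp2;\bZ) = 0$, and the factor-$2$ cross terms vanish in $\bZ/2$).  Since $b^2$ and $c_2(\bC\otimes TM) = b_1b_2$ both represent the nonzero class of $H^4(M;\bZ) = \bZ/2$, their sum vanishes, giving $c_2 = 0$ for the combined bundle.  All Chern classes of the rank-$5$ bundle $(\bC\otimes TM)\oplus Q$ thus vanish, so it is trivial by the iterated obstruction argument, and the h-principle delivers the \tr immersion.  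The crux is the degree-$4$ cancellation $b^2 = b_1b_2$, which uses the identification of the top classes of the two summands with the single top class of the connected sum $M$.
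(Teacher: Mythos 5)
Your proposal is correct, and its underlying computations are the same as the paper's, though the packaging differs. The paper first establishes an intermediate lemma (totally real immersion into $\bC^5$ iff $\overline{p}_1(M)=0$; independent map into $\bC^3$ iff $p_1(M)=0$), and then reduces everything to computing $w_2^2$ and $\overline{w}_2^2$ using the isomorphism $H^4(M;\bZ)\to H^4(M;\bZ/2)$ for closed non-orientable $4$-manifolds together with the fact that $p_1$ reduces mod 2 to $w_2^2$. You bypass the Pontryagin/dual-Pontryagin formulation and work directly with Chern classes of $\ctm$, using the same reduction $c_2(\ctm)\equiv w_2^2 \pmod 2$ and the same iterated obstruction (stable-range) argument to get from vanishing Chern classes to a trivializable bundle. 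Your treatment of $\bRP^4$ is a nice variation: instead of computing $\overline{w}(\bRP^4)$ you exploit the explicit stable splitting $T\bRP^4\oplus\varepsilon\cong 5\gamma$, note $(1+b)^4=1$ so $4L$ is trivial, and thereby produce the $3\varepsilon\oplus L$ decomposition concretely, while the impossibility of $(1+b)(1+q)=1$ is the same obstruction as $\overline{w}_2(\bRP^4)\neq 0$ in disguise. For the connected sum your characteristic-number computation agrees with the paper's cobordism argument. Two small points worth tightening: in the degree-4 computation you write $\beta_i^2\in H^4(\rp2;\bZ)$ where you mean the integral class $b_i^2$ (whose mod-2 reduction is $\beta_i^4$), and you should state explicitly that $H^4(M;\bZ)\cong\bZ/2$ maps isomorphically to $H^4(M;\bZ/2)$, since that is what turns the mod-2 equality $c_2\equiv w_2^2$ into an integral conclusion.
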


\begin{proof}[Proof of Theorem \ref{iff}]
The hypothesis on $M$ implies that $H^4(M;\mathbf{Z})$ is either zero or is torsion-free. We will also use that $2c_1(\ctm) =0.$ 

Let $M$ have such a \tr immersion. So there exists a $Q$ of rank 1 with
\begin{equation}\label{Q}
(\ctm) \oplus Q \cong 5\varep
\end{equation}
and we want to find a $B$ (also of rank 1) such that
%\[
\begin{equation}\label{B}
\ctm \cong 3\varep \oplus B .
\end{equation}
%\]
From
\[
c((\ctm) \oplus Q)=1
\]
we derive
\[
c_2(\ctm) ={c_1(\ctm )}^2 .
\]
Thus 
\[
2c_2(\ctm) =0
\]
which implies
\[
c_2(\ctm) =0.
\]
Dimensional considerations imply that  $\ctm \cong 2\varep \oplus B'$, where $B'$ is a rank two complex vector bundle.  Note that $c_2(B')=c_2(\ctm)$ and so $c_2(B')=0$.  Since $c_2(B')$ coincides with the Euler characteristic of the underlying real oriented bundle, $B'$ admits a global nowhere zero section  
(see \cite[Theorem 12.5]{MS}).  Thus
\[
\ctm \cong 2\varep \oplus B' \cong 2\varep \oplus\varep\oplus B=3\varep \oplus B
\]
and so M admits an independent map into $\bC ^3$.

Now, conversely, we start with
\[
\ctm \cong 3\varep \oplus B
\]
for some $B$ of rank 1
and prove that there exists some $Q$ (also of rank 1) with
\[
(\ctm) \oplus Q \cong 5\varep .
\]
We see that
\[
c(\ctm) = c(B). 
\]
This yields
\[
c_1(\ctm)= c_1(B),
\]
so that 
\[
2 c_1(B) = 0
\]
which implies that 
\[
(c_1(B))^2 = 0
\]
by the hypothesis on $M$.
From Theorem \ref{tri} and Lemma \ref{BB}(a) we have
\[
(\ctm) \oplus Q'\cong 6 \varep
\]
for some $Q'$ of rank 2.  It remains to show that $Q' \cong Q \oplus \varep$, which will follow from $c_2(Q') = 0$; the latter equation holds since 
\[
c(Q') = c(B)^{-1} = (1 + c_1(B))^{-1} = 1 + c_1(B).
\] 
\end{proof}

\begin{remark}\normalfont
We further observe that when (\ref{Q}) and (\ref{B}) hold, it follows that $Q \cong B$ and that these line bundles have trivial square since their first Chern classes have order 2.  Moreover, from the Corollary below we see that when $M$ is orientable these line bundles are in fact trivial, so that $M$ admits a totally real immersion into 
$\bC ^4$.   
\end{remark}

We use the following lemma in the proof of Theorem \ref{sharp}.

\begin{lemma}

\begin{enumerate}
\item[(a)]
$M^4$ has a \tr immersion into $\bC ^5$ \iff the dual Pontryagin class $\overline{p}_1(M)$ is zero.
\item[(b)]
$M^4$ admits an \im into $\bC ^3$ \iff the Pontryagin class $p_1(M)$ is zero.
\end{enumerate}
\end{lemma}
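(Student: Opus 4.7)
The plan is to apply Lemma \ref{QQ} together with the h-principle sufficiency from the preceding remark, reducing both parts to splitting questions for $\ctm$. Writing $c_i = c_i(\ctm) \in H^{2i}(M;\bZ)$, the argument will rest on the two identities
\[
p_1(M) = -c_2 \quad \text{and} \quad \bar p_1(M) = c_2 - c_1^2,
\]
the second being obtained from $c(\ctm) \cdot c(\bC \otimes \nu) = 1$ for the stable normal bundle $\nu$ together with $p_1(\nu) = -c_2(\bC \otimes \nu)$.

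For part (b), an \im into $\bC^3$ is equivalent to a splitting $\ctm \cong 3\varep \oplus B$ with $B$ a complex line bundle. Necessity of $p_1(M) = 0$ is immediate from $c_2(3\varep \oplus B) = 0$. For sufficiency, Theorem \ref{ind} and Lemma \ref{QQ}(b) supply a splitting $\ctm \cong 2\varep \oplus B'$ with $B'$ of complex rank $2$; since $c_2(B') = c_2 = 0$ by hypothesis, \cite[Theorem 12.5]{MS} produces a nowhere zero section of $B'$, giving the desired splitting $\ctm \cong 3\varep \oplus B$.

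For part (a), a \tr immersion into $\bC^5$ is equivalent to $(\ctm) \oplus Q \cong 5\varep$ with $Q$ a complex line bundle. Comparing total Chern classes in degrees $2$ and $4$ gives $c_1(Q) = -c_1$ and then $c_2 = c_1^2$, i.e.\ $\bar p_1(M) = 0$. For the converse one invokes Theorem \ref{tri} (note $[3 \cdot 4 / 2] = 6$) to obtain $(\ctm) \oplus Q' \cong 6\varep$ for some $Q'$ of complex rank $2$, then mimics the proof of Theorem \ref{iff}: the identity $c(Q') = c(\ctm)^{-1}$ forces $c_2(Q') = c_1^2 - c_2 = 0$, so \cite[Theorem 12.5]{MS} yields a nowhere zero section of $Q'$ and hence the splitting $(\ctm) \oplus Q \cong 5\varep$.

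The main obstacle is the interpretation of the ``dual Pontryagin class'': to make the lemma sharp one must take $\bar p_1(M)$ to be $p_1$ of the stable normal bundle rather than the formal inverse $-p_1$ in the cohomology ring. These two candidates differ by the $2$-torsion class $c_1(\ctm)^2$, which vanishes rationally and also whenever $M$ is orientable (since $2c_1 = 0$ and $H^4(M;\bZ)$ is then torsion-free), so the two conditions collapse in the setting of Theorem \ref{iff}; on non-orientable $4$-manifolds with $2$-torsion in $H^4$, however, $c_1^2$ can be nonzero, and this is precisely the phenomenon exploited in Theorem \ref{sharp}.
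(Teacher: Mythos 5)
Your proof is correct and follows essentially the same route as the paper: apply Lemma \ref{QQ} together with the h-principle sufficiency to reduce each direction to a rank-two splitting problem for $\ctm$ or its stable complement, then kill the $c_2$-obstruction to a nowhere-zero section via \cite[Theorem 12.5]{MS}. Your closing observation that $\bar p_1(M) = c_2 - c_1^2$ (the normal-bundle definition) rather than $-p_1(M) = c_2$, and that these differ by the $2$-torsion class $c_1^2$, is a correct and useful clarification of the definition the paper states only in words.
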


\begin{proof}

Assume that $M^4$ has a \tr immersion into $\bC^5$, so there exists a line bundle 
$Q$ such that \eqref{Q} holds.  
The dual Pontryagin class is defined by taking any immersion of $M$ into some 
$\mathbf{R}^m$ and setting $\overline p_1(M)= - c_2(\mathbf{C}\otimes N)$, where $N$ is the normal bundle of the immersion.  Thus $TM\oplus N$ is trivial.  Since
\[
(\ctm) \oplus (\bC\otimes N)
\]
is also trivial, we see that $c_2(\bC\otimes N) = c_2(Q) = 0$.  Thus $\overline{p}_1(M)=0$. 

Conversely, assume $\overline{p}_1(M) = 0$.  As before, we have  
$(\ctm) \oplus Q'\cong 6 \varep$ with $Q'$ of rank 2.  We have 
$\overline{p}_1(M) = -c_2(Q')$, so $Q' \cong Q \oplus \varep$ and we then can conclude that $(\ctm) \oplus Q$ is trivial. 
  
On the other hand, the first Pontryagin class of $M$ is equal, up to sign, to 
$c_2(\ctm )$.  Thus if 
\[
\ctm =3\varep \oplus B,
\]
with $B$ of rank 1, we have $p_1(M)=0$.

Finally, suppose that $p_1(M) = 0$, i.e. $c_2(\ctm) = 0$. We can write
$\ctm \cong 2\varep \oplus B'$, where $\rank B' = 2$.  So $c_2(B') = 0$, which implies that $B' \cong \varep \oplus B$, yielding $\ctm \cong 3\varep \oplus B$, as required.  
\end{proof}

Theorem \ref{sharp} is now an immediate consequence of the following lemma.

\begin{lemma}
$p_1(\mathbf{R}\mathrm{P}^4) = 0$ and 
$\overline{p}_1(\mathbf{R} \mathrm{P}^4)\neq 0$, while the opposite is true for the connected sum of $\mathbf{R} \mathrm{P}^4$ and $\rp2 \times \rp2$.
\end{lemma}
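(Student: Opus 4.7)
The plan is to reduce everything modulo 2 and compute Stiefel--Whitney characteristic numbers. Each manifold in the lemma is a closed non-orientable 4-manifold $M$, so $H^4(M;\bZ)\cong\bZ/2$; since $H^5(M;\bZ)=0$, the Universal Coefficient Theorem identifies this with $H^4(M;\bZ/2)\cong\bZ/2$ via mod 2 reduction, so a degree $4$ integral class vanishes \iff the corresponding mod 2 characteristic number does. Since $E\otimes_\bR\bC\cong E\oplus E$ as real bundles, one has $c_j(E\otimes\bC)\equiv w_j(E)^2\pmod 2$ for any real bundle $E$, giving $p_1\equiv w_2^2$; applying Whitney duality $w_2(N)=w_2(TM)+w_1(TM)^2$ then gives $\overline{p}_1\equiv w_2^2+w_1^4\pmod 2$.

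For $M=\mathbf{R}\mathrm{P}^4$ I would just write out $w(T\mathbf{R}\mathrm{P}^4)=(1+x)^5=1+x+x^4$ in $H^*(\mathbf{R}\mathrm{P}^4;\bZ/2)=\bZ/2[x]/(x^5)$, using $T\mathbf{R}\mathrm{P}^4\oplus\varepsilon\cong 5\gamma$. Then $w_2=0$ (so $p_1=0$), while $w_1^4=x^4$ generates $H^4$ (so $\overline{p}_1\neq 0$).

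For the connected sum $X=\mathbf{R}\mathrm{P}^4\,\#\,(\rp2\times\rp2)$ I would invoke the classical fact that $M_1\,\#\,M_2$ is unoriented cobordant to $M_1\sqcup M_2$ (via a trivial 1-handle), so Stiefel--Whitney and hence mod 2 Pontryagin numbers add. It then remains to compute for $\rp2\times\rp2$: from $w(T)=(1+y_1+y_1^2)(1+y_2+y_2^2)$ and $y_i^3=0$, a short calculation gives $w_2^2=y_1^2y_2^2$ (the top class) and $w_1^4=0$, so both Pontryagin numbers equal $1$ on this factor. Summing with the $\mathbf{R}\mathrm{P}^4$ values, the $p_1$-number of $X$ is $0+1=1$ and the $\overline{p}_1$-number is $1+1=0$, proving the second half of the lemma. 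The only non-elementary ingredient is cobordism-additivity of characteristic numbers under connected sum, but this is a classical fact.
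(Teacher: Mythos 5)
Your proposal is correct and follows essentially the same route as the paper: reduce the degree-$4$ integral classes mod $2$ (you use the cohomology UCT where the paper uses the Bockstein long exact sequence, but the conclusion is identical), use $p_1\equiv w_2^2\pmod 2$, compute $w(T\mathbf{R}\mathrm{P}^4)=(1+x)^5$, and handle the connected sum via cobordism-additivity of Stiefel--Whitney numbers. The one small streamlining is that you package Whitney duality into the formula $\overline{p}_1\equiv w_2^2+w_1^4\pmod 2$ instead of first inverting the total Stiefel--Whitney class to get $\overline{w}$, but the underlying computation is the same and correct.
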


\begin{proof}

Note first that for a closed connected non-orientable $4$-manifold $M$ the coefficient homomorphism
$H^4(M; \bZ) \to H^4(M; \bZ_2)$ induced by reduction mod 2 is an isomorphism, as follows from the long exact sequence induced by the short exact sequence 
$$
0 \to \bZ \to \bZ \to \bZ_2 \to 0
$$ 
of coefficient groups, making use of the fact that $H^4(M; \bZ) \cong \bZ_2$.  It is well known that for any real vector bundle over $M$ this coefficient homomorphism sends $p_1$ to $(w_2)^2$ (see e.g. \cite[Problem 15-A]{MS}).  So $p_1(M) = 0$ \iff ($(w_2(M))^2 = 0$; and 
$\overline{p}_1(M) = 0$ \iff ($(\overline{w}_2(M))^2 = 0$.

\medskip
For $\mathbf{R}\mathrm{P} ^4$ we have 
$w(\mathbf{R}\mathrm{P} ^4) = (1+x)^5 = 1 + x+ x^4$, where $x$ denotes the generator in 1-dimensional cohomology.  So $w_2(\mathbf{R}\mathrm{P} ^4) = 0$ and therefore also $p_1(\mathbf{R}\mathrm{P} ^4) = 0$.  In addition, 
$\overline{w}(\mathbf{R}\mathrm{P} ^4) = 1 + x +x^2 +x^3$, so 
$\overline{w}_2(\mathbf{R}\mathrm{P} ^4) = x^2$ and it follows that 
$\overline{p}_1(\mathbf{R}\mathrm{P} ^4) \neq 0.$ 

\medskip
Now let $M = \mathbf{R}\mathrm{P} ^4 \# (\rp2 \times \rp2)$.  As this manifold is cobordant to the disjoint union of its two ``summands", its Stiefel-Whitney and dual Stiefel-Whitney numbers are the sums of the corresponding characteristic numbers of its summands.  We have determined these characteristic numbers for 
$\mathbf{R} \mathrm{P}^4$, and need only add to them the corresponding characteristic numbers for $\rp2 \times \rp2$.  One easily computes that for both $(w_2)^2$ and 
$(\overline{w}_2)^2$ the characteristic numbers of $\rp2 \times \rp2$ are nonzero.  
It follows that $p_1(M) \neq 0$, while $\overline{p}_1(M) = 0$.   
\end{proof}

In the case of an orientable four-manifold, we can obtain the following more precise result.

\begin{corollary}
Let $M$ be an orientable $4$-manifold.  Then $p_1(M)$ is zero \iff $\overline{p}_1(M)$ is zero, and these conditions are equivalent to the existence of a \tr immersion of 
$M$ into $\bC^4$. When these conditions fail, $M$ admits no \tr immersion into 
$\bC^5$, nor an \im \ into $\bC^3$.   	
\end{corollary}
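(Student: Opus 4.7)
The plan is to deduce the corollary from Theorem \ref{iff}, the preceding Pontryagin-class lemma, and one elementary determinant-line calculation that uses orientability.

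First I would establish the equivalence of $p_1(M)=0$ and $\overline p_1(M)=0$ by invoking the two preceding results directly. That lemma identifies $\overline p_1(M)=0$ with the existence of a \tr immersion $M\to \bC ^5$ and $p_1(M)=0$ with the existence of an \im $M\to \bC ^3$, and Theorem \ref{iff} asserts that for orientable $M$ these two existence statements coincide. The final sentence of the corollary follows at once: when the common vanishing condition fails, part (a) of that lemma blocks the \tr immersion into $\bC ^5$ and part (b) blocks the \im into $\bC ^3$.

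The substantive step is the upgrade from $\bC ^5$ to $\bC ^4$. Assume the equivalent conditions hold. The proof of Theorem \ref{iff} produces a complex line bundle $B$ with $\ctm \cong 3\varep \oplus B$. Taking the top complex exterior power of both sides gives on the one hand
\[
\Lambda ^4 \ctm \cong \Lambda ^3 (3\varep )\otimes B \cong B ,
\]
and on the other hand, since complexification commutes with exterior powers,
\[
\Lambda ^4 (TM\otimes _{\bR } \bC ) \cong (\Lambda ^4 _{\bR } TM )\otimes _{\bR } \bC .
\]
Orientability of $M$ trivializes the real line bundle $\Lambda ^4 _{\bR } TM$, so its complexification is trivial. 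Hence $B\cong \varep $, so $\ctm \cong 4\varep $, and the sufficiency part of Lemma \ref{QQ}(a) --- the Gromov h-principle cited in the remark following that lemma --- delivers a \tr immersion $M\to \bC ^4$. The converse is routine: composing any \tr immersion $M\to \bC ^4$ with the standard inclusion $\bC ^4 \hookrightarrow \bC ^5$ yields a \tr immersion into $\bC ^5$, whence $\overline p_1(M)=0$.

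The only potentially delicate point is the triviality of $B$, and the determinant-line calculation above bypasses any torsion analysis in $H^2(M;\bZ )$; with that observation in hand, I do not anticipate any serious obstacle.
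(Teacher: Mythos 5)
Your proof is correct, and it reaches the same conclusion by a slightly different route. The paper also exploits orientability to trivialize the determinant line of $\ctm$ (hence $c_1(\ctm)=0$), but it applies this observation to the line bundle $Q$ arising from the totally real side: $Q$ becomes trivial, so $\ctm$ is stably trivial, and then a dimensional argument upgrades stable triviality to triviality. You instead apply the same observation to the line bundle $B$ from the independent-map side and read off $B \cong \varep$ directly from the canonical isomorphism $\Lambda^4_{\bC}\,\ctm \cong B$, which bypasses the stable-triviality step entirely; this is marginally more economical. Both proofs then invoke the h-principle sufficiency noted in the remark after Lemma \ref{QQ} to pass from $\ctm \cong 4\varep$ to a totally real immersion of $M$ into $\bC^4$. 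One organizational difference: the paper never isolates the biconditional $p_1(M)=0 \iff \overline{p}_1(M)=0$, instead obtaining it as a byproduct of showing that each of conditions (1) and (3) implies condition (2); your framing, which deduces the biconditional at the outset from the preceding Lemma combined with Theorem \ref{iff}, is equivalent and arguably cleaner.
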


\begin{proof}
Let $M$ be an orientable 4-manifold.  Suppose that $M$ admits a \tr immersion into 
$\bC ^5$.  In this case, we know that $(\ctm) \oplus Q$ is trivial for a complex line bundle $Q$.  Hence  we have $c_1(\ctm) = - c_1(Q)$.

Since $M$ is orientable, the top exterior power of $TM$ is a trivial real line bundle, hence the top exterior power of $\ctm$ is a trivial complex line bundle, and so (e.g., by using the splitting principle) $c_1(\ctm) = 0.$  Hence $Q$ is a trivial line bundle.

It follows that $\ctm$ is stably trivial, and therfore is trivial for dimensional reasons.  In turn, this implies that $M$ admits a \tr immersion into $\bC^4.$ 

\medskip 
Similarly, if an orientable $4$-manifold $M$ admits an independent map into $\bC ^3$,   
then in fact $M$ admits a \tr immersion into $\bC^4$.
\end{proof}

\begin{remark}\normalfont
We provide a summary of the results in this section for a four-dimensional manifold 
$M$, in terms of the following list of conditions that the manifold may satisfy:

\begin{enumerate}

	\item $M$ admits a \tr immersion into $\bC^5$.
	
	\item $M$ admits a \tr immersion into $\bC^4$.
	
	\item $M$ admits an \im into $\bC^3$.
	
	\item $M$ admits an \im into $\bC^4$.
	
	\item $\ctm$ is trivial.
	
	\item The first dual Pontryagin class of $M$ vanishes.
	
	\item The first Pontryagin class of $M$ vanishes. 
		
\end{enumerate}

\noindent
Then:

\begin{enumerate}

	\item[(a)]  Conditions (2), (4), and (5) are equivalent for all $4$-manifolds,
	and plainly imply the remaining conditions.   
	
	\item[(b)]  Conditions (1) and (6) are equivalent for all $4$-manifolds.  The 
	same holds for Conditions (3) and (7).
	
	\item[(c)]  Conditions (1), (3), (6), and (7) are all satisfied if $M$ is open. 	
	\item[(d)]  All seven conditions are equivalent if $M$ is orientable.
	
	\item[(e)]  By Theorem 4.2, conditions (1) and (3) are not equivalent for compact 
	non-orientable manifolds; indeed, neither implies the other.
	
	\item[(f)] The conditions (1), (3), (6), and (7) are satisfied by the 
	non-orientable manifolds $\rp2 \times \bR^2$ and $\rp2 \times S^2$, but these 
	manifolds do not satisfy the conditions (2), (4), and (5), since in both case the 
	first Chern class of the complexified tangent bundle is nonzero. 
		
\end{enumerate}
It seems unlikely that such complete results can be obtained for manifolds of larger dimension.	
 
\end{remark}

%%%%%%%%%%%%%%%%%%%%%%%%%%%%%%%%%%%%%%%%%%%%%%%%%%%%%%%%%%%%%%%%%%%%%%%%%%%%%%%%   

\section{A geometric approach to Theorem \ref{iff}}

Here is an alternative proof that the equation (\ref{B}),
%\begin{equation} \label{B}
\[
\ctm =3\varep \oplus B,
\]
%\end{equation}
implies, for orientable $M^4$, that there exists some $Q$ satisfying equation 
(\ref{Q}), 
\[
(\ctm) \oplus Q= 5\varepsilon .
\]
The zero set of a generic section $\sigma_1$ of the line bundle $B$ is a (possibly not connected) 2-dimensional orientable submanifold $Y\subset M$.  In the usual way we have $2c_1(B)=0$ and so this is also true for the restriction of $B$ to $Y$.  But since $Y$ is  orientable  we may conclude that also $c_1(B|_Y)=0$, which implies that $B|_Y$ is trivial.  Let $\sigma _2$ be a nonzero section of $B|_Y$ and extend $\sigma _2$ smoothly to a section over $M$.  These two sections provide a fiber-surjective map $M\times \bC ^2\rightarrow B$.  In light of \eqref{B}, we then have a fiber-surjective map
\[
M\times \bC ^5\rightarrow \ctm 
\]  
and this leads to \eqref{Q}.

%%%%%%%%%%%%%%%%%%%%%%%%%%%%%%%%%%%%%%%%%%%%%%%%%%%%%%%%%%%%%%%%%%%%%%%%%%%%%%%%%

\medskip

\end{document}